\def\balpha{\boldsymbol{\alpha}}
\newcommand\bX{\boldsymbol{X}}
\newcommand\bY{\boldsymbol{Y}}
\newcommand\bZ{\boldsymbol{Z}}
\newcommand\bW{\boldsymbol{W}}
\newcommand\EE{\mathbb E}
\newcommand\RR{\mathbb R}
\newcommand\bbA{\mathbb A}
\DeclareMathOperator*{\argmin}{arg\,min}
\newtheorem{theorem}{Theorem}[section]
\newtheorem{definition}[theorem]{Definition}
\newtheorem{proposition}[theorem]{Proposition}
\newtheorem{assumption}[theorem]{Assumption}
\title{\LARGE \bf
 How can the tragedy of the commons be prevented?:\\ Introducing Linear Quadratic Mixed Mean Field Games
}
\author{G\"ok{\c c}e Dayan{\i}kl{\i} and Mathieu Lauri{\`e}re 
\thanks{Department of Statistics,
  University of Illinois at Urbana-Champaign, 
  Champaign, IL 61820, USA 
        {\tt\small gokced@illinos.edu}}%
\thanks{NYU-ECNU Institute of Mathematical Sciences at NYU Shanghai; Shanghai Frontiers Science Center of Artificial Intelligence and Deep Learning; NYU Shanghai, 567 West Yangsi Road, Shanghai, 200126, People’s Republic of China
        {\tt\small mathieu.lauriere@nyu.edu}}%
}
\begin{document}

\maketitle
\thispagestyle{empty}
\pagestyle{empty}

\begin{abstract}

In a regular mean field game (MFG), the agents are assumed to be insignificant, they do not realize their effect on the population level and this may result in a phenomenon coined as the Tragedy of the Commons by the economists. However, in real life this phenomenon is often avoided thanks to the underlying altruistic behavior of (all or some of the) agents. Motivated by this observation, we introduce and analyze two different mean field models to include altruism in the decision making of agents. In the first model, mixed individual MFGs, there are infinitely many agents who are partially altruistic (i.e., they behave partially cooperatively) and partially non-cooperative. In the second model, mixed population MFGs, one part of the population behaves cooperatively and the remaining agents behave non-cooperatively. Both models are introduced in a general linear quadratic framework for which we characterize the equilibrium via forward backward stochastic differential equations. Furthermore, we give explicit solutions in terms of ordinary differential equations, and prove the existence and uniqueness results.

\end{abstract}

\section{Introduction}

Mean field games (MFG) and mean field control (MFC) offer frameworks to analyze decision-making in large populations by using a mean-field approximation. In these problems, instead of considering interactions between a finite number individuals (which increases exponentially with the number of individuals), we consider the interactions between one representative agent and a mean-field, which is the population's distribution\footnote{This can be state, control or joint state-control distribution of the population.}. On the one hand, MFGs delve into non-cooperative settings, where individuals make choices based on their own benefit and the average behavior of everyone else. This corresponds to a Nash equilibrium. On the other hand, MFC considers a cooperative situation, where agents jointly optimize an objective function which represents an average over the whole population of individual's costs. Alternatively, this situation can be viewed as an optimization problem for a central (i.e., social) planner. To be more specific, the notions of solutions studied respectively in MFGs and MFC are Nash equilibrium and social optimum.

So far the two settings have been extensively studied, both theoretically and numerically. We refer to~\cite{Bensoussan_Book,CarmonaDelarue_book_I,CarmonaDelarue_book_II} for more information. For these two settings many applications have been proposed, such as finance~\cite{carmona2015mean,cardaliaguet2018mean,carmona2021applications,carmona2023deep}, economics~\cite{achdou2014partial,achdou2022income,advertisement}, epidemic management~\cite{elie2020contact,aurell2022optimal,olmez2022modeling}, cybersecurity~\cite{kolokoltsov2016mean}, or energy production and climate change~\cite{gueant2011mean,chan2017fracking,alasseur2020extended,carbon_StackelbergMFG,dayanikli2024multipopulation}, to cite just a few. 

These two settings correspond to extreme cases: the agents are either fully non-cooperative (selfish) or fully cooperative (altruistic). However, many real life applications do not fall in one of these situations because there is a mix of cooperation and competition. The main focus of this paper is to study models which combine cooperative and non-cooperative behaviors in the context of mean field populations of agents.

One of our motivations for combining the cooperative and non-cooperative behaviors is the \textit{tragedy of the commons}, which describes a situation where individuals, acting in their own self-interest, overuse a shared resource, ultimately exploiting it to create a long term negative outcome for everyone. For instance shepherds might add more sheep to a pasture for personal gain, but collectively they risk depleting the grass, harming the land, and hurting everyone's livelihood in the long run. However, in many cases, the agents anticipate this catastrophic outcome and behave in a sufficiently altruistic (i.e. cooperative) way to avoid exploiting common resources~\cite{ostrom1990governing,ostrom1999coping}. 

The literature on the models that includes both cooperative and non-cooperative behavior in the mean field models is not yet well developed. The closest analysis to our equilibrium notions can be found in~\cite{carmona2023nash}; however, the models differ since in~\cite{carmona2023nash} the cooperative agents do not take it into account the behavior of the non-cooperative agents in their optimization. A related setup is given in~\cite{angiuli2023reinforcementmixedmfcg} where the authors study an extension of MFGs where each
agent solves an MFC which can be seen as the limiting scenario for a competition between a large number of large coalitions. In~\cite{Barreiro_Gomez_Duncan_Tembine_2020}, authors study co-opetitive linear quadratic mean field games in which agents take into account the other agent’s
costs positively or negatively while making decisions. Recently,~\cite{guo2023mesob} studied a bi-level optimization problem to balance equilibrium and social optimum. In this paper, we focus on two types of mean field models to explore new equilibrium notions in the cases where both cooperative and non-cooperative behaviors can be prevalent. In the first type, there is a single group of agents, in which every agent's objective function contains terms which model individuals' own cooperative and non-cooperative behaviors. In the second type, there are two sub-groups in the population, one in which all the agents are cooperative and the other one in which all the agents are non-cooperative. Finally, there is also literature on mean field models that models multi-population settings in which the agents of each population are either non-cooperative (commonly referred as multi-population MFG)~\cite{achdou2017mean,cirant2015multi,dayanikli2024multipopulation} or cooperative (commonly referred as mean field type games)~\cite{djehiche2017mean,barreiro2021mean}. In this paper, we consider linear-quadratic models. Such models are popular for their tractability, including in the MFG and MFC setting~\cite{huang2007large,bardi2012explicit,bensoussan2016linear,delarue2020selection,wang2020mean}. The rest of the paper is organized as follows. In Section~\ref{sec:finite-pop}, we introduce the finite population models. In Section~\ref{sec:mean-field-models}, we present the corresponding mean field models by taking the number of agents to infinity. The main results are in Section~\ref{sec:main-results}: we first show that, in each case, the solution can be characterized by a system of forward-backward stochastic differential equations (FBSDEs) of McKean-Vlasov type by following Pontryagin stochastic maximum principle. We then show that these FBSDE systems can be reduced to systems of forward-backward ordinary differential equations (FBODEs). Finally we give the existence and uniqueness results for the first model and the existence results for the second model.

\section{Finite population version}
\label{sec:finite-pop}

We consider a finite time horizon $T>0$.  We will use bold letters to denote functions of time. 
To alleviate the notations, we will restrict the presentation to one-dimensional states and actions but the ideas and the theoretical analysis can be generalized to the multi-dimensional case in a straightforward way. We will consider $\RR$-valued open-loop controls that are progressively measurable processes, adapted to all the available information, and square integrable. We denote by $\mathbb{A}$ the set of such controls. We will denote by $\EE$ the expectation of a random variable.

\subsection{Mixed individual}

We consider a population of $N$ non-cooperative agents that are indistinguishable. 
We will denote by $\balpha^{i} = (\alpha^{i}_t)_{t \in [0,T]}$ the control used by agent $i \in [N]$ where $[N] :=\{1,2, \dots, N\}$.

Assume the controls used by the agents other than agent $i$ are given and denoted by $\underline\balpha^{-i} = (\balpha^{1}, \dots, \balpha^{i-1}, \balpha^{i+1},\dots,\balpha^{N})$. The state of agent $i \in [N]$ at time $t$ is denoted by $X^{i, \balpha^{i }}_t \in \RR$ when using control $\balpha^{i }$ and its dynamics are: 
\small
\begin{equation*}
\begin{aligned}
    dX^{i, \balpha^{ i}}_t =&\Big(b_\alpha \alpha^{i}_t + b_X X^{i, \balpha^{ i}}_t\\
    &+ b_\mu  \big(\lambda \bar X_t + (1-\lambda) \EE[X^{i, \balpha^{i}}_t]\big) \Big) dt + \sigma dW^{ i}_t, 
\end{aligned}
\end{equation*}
\normalsize
where $b_\alpha, b_X, b_\mu, \sigma \in \RR$ are constant coefficients,  $\bW^{i}$ is a Brownian motion representing idiosyncratic noise, independent of all the other sources of randomness, and $X_0^i\sim\mu_0$ where $\mu_0$ is the initial distribution. Here and thereafter, the expectation of agent $i$'s state is denoted by $\EE[X^{i, \balpha^{i}}_t]$, while the mean of the states is denoted by:
    $\bar{X}_t
    = \frac{1}{N} \sum_{j=1}^{N} X^{j,\balpha^{j}}_t. $
    We stress that the evolution of $X^{i,\balpha^{i}}_t$ depends on the controls used by all the agents through the $\bar{X}_t$. 

The (non-cooperative) agent $i \in [N]$ aims to minimize the following cost over $\balpha^{i}$, while $\underline\balpha^{-i}$ is given:
\small
\begin{align*}
        &J(\balpha^{i}; \underline\balpha^{-i}) =\EE\Bigg[\int_0^T \Big(\frac{c_\alpha}{2} (\alpha^{i}_t)^2 + \frac{c_X}{2}(X^{i, \balpha^{i}}_t)^2 
        \\
        &\qquad + \frac{c_\mu}{2} \left( \lambda \bar X_t^2 + (1-\lambda) \EE[X^{i, \balpha^{i}}_t]^2 \right)  
        \Big) dt  + \frac{c_T}{2} (X^{i, \balpha^{i}}_T)^2\Bigg].
\end{align*}
\normalsize
Here, $c_\alpha, c_X, c_\mu, c_T>0$ are constant coefficients. The first and second term respectively penalize large (in absolute value) individual actions and states. The third term penalizes large means. Here, and $\lambda \in[0,1]$ gives the level of altruism of the representative agent. Note that when $N$ is large, agent $i$ has only a negligible influence on the empirical average $\bar X_t$, so removing it from the cost function would not modify her optimal control. However, agent $i$ has an influence on her own state's mean $\EE[X^{i, \balpha^{i}}_t]$. The last term is a terminal cost which penalizes large terminal state.

\begin{definition}[Mixed individual equilibrium]
    Let $\epsilon>0$. An $\epsilon$-Nash equilibrium for the mixed individual equilibrium is a control profile $\underline{\hat\balpha} = (\hat\balpha^{j})_{j=1,\dots,N}$ such that: 
    for every $i \in [N]$, $\hat\balpha^{i}$ is an $\epsilon$-minimizer for $J(\cdot; \underline{\hat\balpha}^{-i})$. 
    A Nash equilibrium is an $\epsilon$-equilibrium with $\epsilon=0$. 
\end{definition}

\subsection{Mixed population}

We consider two groups, with respectively $N^{{\rm NC}}$ and $N^{{\rm C}}$ agents. The agents of the first group are non-cooperative, while the agents in the second group are cooperative with agents of the same sub-group. Let $N = N^{{\rm NC}} + N^{{\rm C}}$ be the total number of agents and let $p = \frac{N^{{\rm NC}}}{N}$ be the proportion of non-cooperative agents.

We will denote by $\balpha^{{\rm NC},i} = (\alpha^{{\rm NC},i}_t)_{t \in [0,T]}$ the control used by non-cooperative agent $i \in [N^{\rm NC}]$ and by $\balpha^{{\rm C},i} = (\alpha^{{\rm C},i}_t)_{t \in [0,T]}$ the control used by cooperative agent $i \in [N^{\rm C}]$.

\textbf{Non-cooperative agents. } Assume the controls $\underline\balpha^{{\rm NC},-i} = (\balpha^{{\rm NC},1}, \dots, \balpha^{{\rm NC},i-1}, \balpha^{{\rm NC},i+1},\dots,\balpha^{{\rm NC},N^{\rm NC}})$ used by the other non-cooperative agents and the controls $\underline\balpha^{{\rm C}} = (\balpha^{{\rm C},1}, \dots, \balpha^{{\rm C},N^{\rm C}})$ used by the cooperative agents are given. The state of non-cooperative agent $i \in [N^{\rm NC}]$ at time $t$ is denoted by $X^{i, \balpha^{{\rm NC},i }}_t \in \RR$ when using control $\balpha^{{\rm NC},i }$ and its dynamics are: 
\small
\begin{equation*}
\begin{aligned}
    dX^{\balpha^{{\rm NC}, i}}_t =&\Big(b^{{\rm NC}}_\alpha \alpha^{{\rm NC}, i}_t + b^{{\rm NC}}_X X^{i, \balpha^{{\rm NC}, i}}_t \\
    &+ b^{{\rm NC}}_\mu  \big(p\bar{X}^{{\rm NC}}_t + (1-p) \bar{X}_t^{{\rm C}}\big)\Big) dt + \sigma^{\rm NC} dW^{{\rm NC}, i}_t,
\end{aligned}
\end{equation*}
\normalsize
where $b^{{\rm NC}}_\alpha, b^{{\rm NC}}_X, b^{{\rm NC}}_\mu, \sigma^{\rm NC} \in \RR$ are constant coefficients,  $\bW^{{\rm NC}, i}$ is a Brownian motion representing idiosyncratic noise, independent of all the other sources of randomness {and $X_0^{{\rm NC},i}\sim\mu^{\rm NC}_0$ are i.i.d., where $\mu^{\rm NC}_0$ is the initial distribution. Here and thereafter, the means of the states of each subgroups are denoted by:
\small
\begin{equation*}
    \bar{X}^{{\rm NC}}_t
    = \frac{1}{N^{\rm NC}} \sum_{j=1}^{N^{\rm NC}} X^{j,\balpha^{{\rm NC}, j}}_t ,
    \quad 
    \bar{X}^{{\rm C}}_t
    = \frac{1}{N^{\rm C}} \sum_{j=1}^{N^{\rm C}} X^{j,\balpha^{{\rm C}, j}}_t. 
\end{equation*}
\normalsize
We stress that the evolution of $X^{i,\balpha^{{\rm NC}, i}}_t$ depends on the controls used by all the agents through the above averages. 

This non-cooperative agent $i \in [N^{{\rm NC}}]$ aims to minimize the following cost over $\balpha^{{\rm NC},i}$, while $\underline\balpha^{{\rm NC},-i}$ and $\underline\balpha^{{\rm C}} = (\balpha^{{\rm C},j})_{j=1,\dots,N^{\rm C}}$ are given:
\small
\begin{align*}
        &J^{\rm NC}(\balpha^{{\rm NC},i}; \underline\balpha^{{\rm NC},-i}, \underline\balpha^{{\rm C}}) 
        \\
        &=\EE\Bigg[\int_0^T \Big(\frac{c^{{\rm NC}}_\alpha}{2} (\alpha^{{\rm NC},i}_t)^2 + \frac{c^{{\rm NC}}_X}{2}(X^{i, \balpha^{{\rm NC},i}}_t)^2 
        \\
        &\qquad + \frac{c^{{\rm NC}}_\mu}{2}\left(p \bar X_t^{{\rm NC}} +(1-p)\bar{X}^{{\rm C}}_t\right)^2  
        \Big) dt  + \frac{c^{{\rm NC}}_T}{2} (X^{i, \balpha^{{\rm NC},i}}_T)^2\Bigg].
\end{align*}
\normalsize
Here, $c^{{\rm NC}}_\alpha, c^{{\rm NC}}_X, c^{{\rm NC}}_\mu, c^{{\rm NC}}_T>0$ are constant coefficients. The first and second term respectively penalize large (in absolute value) individual actions and states. The third term penalizes large means. Note that, when $N^{\rm NC}$ is large, non-cooperative agent $i$ has only a negligible influence on this term, so removing it from the cost function would not modify her optimal control. However, this term will have an influence for the cooperative agents so we include it here too. The last term is a terminal cost which penalizes large terminal state.

{\bf Cooperative agents. } The state of cooperative agent $i \in [N^{\rm C}]$ at time $t$ is denoted by $X^{i, \balpha^{{\rm C},i }}_t \in \RR$ and its dynamics are:
\small
\begin{equation*}
\begin{aligned}
    dX^{\balpha^{{\rm C}, i}}_t =& \Big(b^{{\rm C}}_\alpha \alpha^{{\rm C}, i}_t + b^{{\rm C}}_X X^{i, \balpha^{{\rm C}, i}}_t\\
    &+b^{{\rm C}}_\mu  \big(p\bar{X}^{{\rm NC}}_t + (1-p) \bar{X}_t^{{\rm C}}\big)\Big) dt + \sigma^{\rm C} dW^{{\rm C}, i}_t,
\end{aligned}
\end{equation*}
\normalsize
where $b^{{\rm C}}_\alpha, b^{{\rm C}}_X, b^{{\rm C}}_\mu, \sigma^{\rm C} \in \RR$ are constant coefficients,  $\bW^{{\rm C}, i}$ is a Brownian motion representing idiosyncratic noise, independent of all the other sources of randomness and $X_0^{{\rm C},i}\sim\mu^{\rm C}_0$ i.i.d., where $\mu^{\rm C}_0$ is the initial distribution. 

Given the non-cooperative agents' controls $\underline\balpha^{{\rm NC}} = (\balpha^{{\rm NC},j})_{j=1,\dots,N^{\rm NC}}$, the cooperative agents try to jointly minimize over $\underline\balpha^{{\rm C}} = (\balpha^{{\rm C},j})_{j=1,\dots,N^{\rm C}}$ the social cost:
\small
\begin{align*}
        &J^{\rm C}(\underline\balpha^{{\rm C}}; \underline\balpha^{{\rm NC}}) = \frac{1}{N^{\rm C}} \sum_{i=1}^{N^{\rm C}}\EE\Bigg[\int_0^T \Big(\frac{c^{{\rm C}}_\alpha}{2} (\alpha^{{\rm C},i}_t)^2 + \frac{c^{{\rm C}}_X}{2}(X^{i, \balpha^{{\rm C},i}}_t)^2 
        \\
        &\qquad + \frac{c^{{\rm C}}_\mu}{2}\left(p \bar X_t^{{\rm NC}} +(1-p)\bar{X}^{{\rm C}}_t\right)^2  
        \Big) dt  + \frac{c^{{\rm C}}_T}{2} (X^{i, \balpha^{{\rm C},i}}_T)^2\Bigg].
\end{align*}
\normalsize
Here, $c^{{\rm C}}_\alpha, c^{{\rm C}}_X, c^{{\rm C}}_\mu, c^{{\rm C}}_T>0$ are constant coefficients. The terms composing the cost have the same interpretation as above. Note that the cooperative agents have a collective impact on the average $\bar{X}^{{\rm C}}_t$.

\begin{definition}[Mixed population equilibrium]
    Let $\epsilon>0$. An $\epsilon$-Nash equilibrium for the mixed population equilibrium is a pair of a control profile $\underline{\hat\balpha}^{{\rm NC}} = (\hat\balpha^{{\rm NC},j})_{j=1,\dots,N^{\rm NC}}$ for the non-cooperative agents and a control profile $\underline{\hat\balpha}^{{\rm C}} = (\hat\balpha^{{\rm C},j})_{j=1,\dots,N^{\rm C}}$ for the cooperative agents such that:
    \begin{itemize}
        \item for every $i \in [N^{\rm NC}]$, $\hat\balpha^{{\rm NC},i}$ is an $\epsilon$-minimizer for $J^{\rm NC}(\cdot; \underline{\hat\balpha}^{{\rm NC},-i}, \underline{\hat\balpha}^{{\rm C}})$
        \item $\underline{\hat\balpha}^{{\rm C}}$ is an $\epsilon$-minimizer $J^{\rm C}(\cdot; \underline{\hat\balpha}^{{\rm NC}})$.
    \end{itemize}
    A Nash equilibrium is an $\epsilon$-equilibrium with $\epsilon=0$. 
\end{definition}

\section{Mean field version}
\label{sec:mean-field-models}

In this section, we introduce the corresponding mean field models for the mixed individual and mixed population setups.

\subsection{Mixed individual}
We assume that $N\rightarrow\infty$. Since every agent is identical, we can focus on a representative agent. The representative agent aims to minimize the following cost by choosing a square-integrable, progressively measurable control process $\balpha =(\alpha_t)_{t\in[0,T]}$ where $\alpha_t \in \RR$:
\small
\begin{equation}
\begin{aligned}
    &J(\balpha; \bar \bX):=\EE\Bigg[\int_0^T \Big(\frac{c_\alpha}{2} \alpha_t^2 + \frac{c_X}{2}(X^{\balpha}_t)^2 + \\
    &\frac{c_\mu}{2}\left(\lambda (\bar X_t)^2 +(1-\lambda)(\bar{X}^{\balpha}_t)^2\right)\Big) dt  + \frac{c_T}{2} (X_T^{\balpha})^2\Bigg],
\end{aligned}
\end{equation}
\normalsize
where $\bar{X}^{\balpha}_t = \int_{\RR} x d\mu_t^{\balpha}(x)$. Here, $c_\alpha, c_X, c_\mu, c_T>0$ are constant coefficients and we recall that $\lambda \in[0,1]$ gives the level of altruism of the representative agent. When $\lambda = 0$, the agents in a pure game theoretical (i.e., non-cooperative) setup; whereas, when $\lambda=1$, the agents are in a control (i.e., cooperative) setup. Therefore, this setting can be understood as an interpolation between MFG and MFC problems.
The dynamics of the representative agent's state process $\bX^{\balpha} = (X^{\balpha}_t)_{t\in[0,T]}$ where $X_t\in \RR$ is given as:
\small
\begin{equation*}
\begin{aligned}
    dX^{\balpha}_t = \big(b_\alpha \alpha_t +b_X X_t^{\balpha}+ b_\mu  \big(\lambda \bar{X}_t + (1-\lambda) \bar{X}_t^{\balpha}\big)\big) dt + \sigma dW_t,
\end{aligned}
\end{equation*}
\normalsize
where $b_\alpha, b_X, b_\mu \in \RR$ are non-zero constant coefficients, $\bW$ is the Brownian motion representing the idiosyncratic noise, $X_0 \sim \mu_0$, and $\lambda$ is interpreted as introduced above.

\begin{definition}[Mixed Individual MFNE]\label{def:mixed_individual_mfg_nash} We will call a control and mean field tuple $(\hat{\balpha}, \hat{\bar \bX})$ a mixed individual mean field Nash equilibrium (MI-MFNE) if:
\begin{itemize}
    \item[i.] $\hat \balpha$ is the best response of the representative agent given the mean field $\hat{\bar\bX}$. In other words, $\hat{\balpha} \in \argmin_{\balpha\in \bbA} J(\balpha;\hat{\bar{\bX}})$,
    \item[ii.] For all $t\in[0,T]$, we have $\hat{\bar X}_t = \bar{X}^{\hat{\balpha}}_t$.
\end{itemize}
\end{definition}

\subsection{Mixed population}

We assume $N\rightarrow\infty$ and the population is mixed, i.e., a proportion $p$ of the agents are non-cooperative and the remaining $(1-p)$ proportion of the agents are cooperative. For the mixed population mean field model, we need to introduce both the model of the representative non-cooperative agent and of the representative cooperative agent. 

The representative \textbf{non-cooperative} agent aims to minimize the following cost over the square integrable and progressively measurable control process $\balpha^{\rm NC} =(\alpha^{\rm NC}_t)_{t\in[0,T]}$ where $\alpha^{\rm NC}_t \in \RR$:
\small
\begin{equation}
\begin{aligned}
    &J^{\rm NC}(\balpha^{\rm NC}; \bar{X}^{\rm NC}, \bar{X}^{\rm C}):=
    \\
    &\EE\Bigg[\int_0^T \Big(\frac{c^{\rm NC}_\alpha}{2} (\alpha^{\rm NC}_t)^2 + \frac{c^{\rm NC}_X}{2}(X^{\balpha^{\rm NC}}_t)^2 \\
    &+\frac{c^{\rm NC}_\mu}{2}\left(p \bar X_t^{\rm NC} +(1-p)\bar{X}^{\rm C}_t\right)^2  
        \Big) dt  + \frac{c^{\rm NC}_T}{2} (X^{\balpha^{\rm NC}}_T)^2\Bigg].
    \end{aligned}
\end{equation}
\normalsize
Here, $c^{\rm NC}_\alpha, c^{\rm NC}_X, c^{\rm NC}_\mu, c^{\rm NC}_T>0$ are constant coefficients. The dynamics of the representative non-cooperative agent's state process $\bX^{\balpha^{\rm NC}} = (X^{\balpha^{\rm NC}}_t)_{t\in[0,T]}$ where $X^{\balpha^{\rm NC}}_t\in \RR$ is:
\small
\begin{equation*}
\begin{aligned}
    dX^{\balpha^{\rm NC}}_t =& \Big(b^{\rm NC}_\alpha \alpha^{\rm NC}_t + b^{\rm NC}_X X_t^{\rm NC} \\
    &+ b^{\rm NC}_\mu  \big(p\bar{X}^{\rm NC}_t + (1-p) \bar{X}_t^{\rm C}\big)\Big) dt + \sigma dW^{\rm NC}_t,
\end{aligned}
\end{equation*}
\normalsize
where $b^{\rm NC}_\alpha, b^{\rm NC}_X, b^{\rm NC}_\mu \in \RR$ are non-zero constant coefficients, $\bW^{\rm NC}$ is the Brownian motion representing the idiosyncratic noise for non-cooperative agents, $X^{\rm NC}_0 \sim \mu^{\rm NC}_0$, and $p$ is interpreted as introduced above.

Similarly, the representative \textbf{cooperative} agent aims to minimize the following cost over the square integrable and progressively measurable control process $\balpha^{\rm C} =(\alpha^{\rm C}_t)_{t\in[0,T]}$ where $\alpha^{\rm C}_t \in \RR$:
\small
\begin{equation}
\begin{aligned}
    &J^{\rm C}(\balpha^{\rm C}; \bar{X}^{\rm NC}):= \EE\Bigg[\int_0^T \Big(\frac{c^{\rm C}_\alpha}{2} (\alpha^{\rm C}_t)^2 +\frac{c^{\rm C}_X}{2}(X^{\balpha^{\rm C}}_t)^2  
    \\
    &+\frac{c^{\rm C}_\mu}{2}\left(p \bar X_t^{\rm NC} +(1-p)\bar{X}^{\balpha^{\rm C}}_t\right)^2  
        \Big) dt  + \frac{c^{\rm C}_T}{2} (X^{\balpha^{\rm C}}_T)^2\Bigg].
    \end{aligned}
\end{equation}
\normalsize
Here, $c^{\rm C}_\alpha, c^{\rm C}_X, c^{\rm C}_\mu, c^{\rm C}_T>0$ are constant coefficients. The dynamics of the representative cooperative agent's state process $\bX^{\balpha^{\rm C}} = (X^{\balpha^{\rm C}}_t)_{t\in[0,T]}$ where $X^{\balpha^{\rm C}}_t\in \RR$ is given as:
\small
\begin{equation*}
\begin{aligned}
    dX^{\balpha^{\rm C}}_t =& \Big(b^{\rm C}_\alpha \alpha^{\rm C}_t + b^{\rm C}_X X_t^{\rm C}\\
    &+ b^{\rm C}_\mu  \big(p\bar{X}^{\rm NC}_t + (1-p) \bar{X}_t^{\balpha^{\rm C}}\big)\Big) dt + \sigma dW^{\rm C}_t,
\end{aligned}
\end{equation*}
\normalsize
where $b^{\rm C}_\alpha, b^{\rm C}_X, b^{\rm C}_\mu \in \RR$ are non-zero constant coefficients, $\bW^{\rm C}$ is the Brownian motion representing the idiosyncratic noise for non-cooperative agents, $X^{\rm C}_0 \sim \mu^{\rm C}_0$, and $p$ is interpreted as introduced above.

\begin{definition}[Mixed Population MFE]\label{def:mixedpop_mfg_nash}
We will call controls and mean field tuple $(\hat{\balpha}^{\rm NC}, \hat{\balpha}^{\rm C}, \hat{\bar{\bX}}^{\rm NC})$ a mixed population mean field equilibrium (MP-MFE) if:
    \begin{itemize}
        \item[i.] $\hat{\balpha}^{\rm NC}$ is the best response of the representative non-cooperative agent given the mean fields of the non-cooperative and cooperative agents, $\hat{\bar{\bX}}^{\rm NC}$ and ${\bar{X}}^{\hat{\balpha}^{\rm C}}$, respectively. In other words, $\hat{\balpha}^{\rm NC} \in \argmin_{\balpha^{\rm NC}\in \mathbb{A}} J^{\rm NC}(\balpha^{\rm NC};\hat{\bar{\bX}}^{\rm NC},{\bar{\bX}}^{\hat{\balpha}^{\rm C}})$,
        \item[ii.] For all $t\in[0,T]$, we have $\hat{\bar{X}}_t^{\rm NC} = \bar{X}^{\hat{\balpha}^{\rm NC}}_t$,
        \item[iii.] $\hat{\balpha}^{\rm C}$ is the optimal control of the representative cooperative agent given the mean field of the non-cooperative agents, $\hat{\bar{\bX}}^{\rm NC}$. In other words, $\hat{\balpha}^{\rm C} \in \argmin_{\balpha^{\rm C}\in \mathbb{A}} J^{\rm C}(\balpha^{\rm C};\hat{\bar{\bX}}^{\rm NC})$.
    \end{itemize}
\end{definition}

We would like to stress that a mixed population mean field problem is different than multi-population MFG, in which all the agents are non-cooperative, and than mean-field type games, in which all the agent of each sub-population are cooperative with each other. It is also different than a major-minor MFG problem, see e.g.~\cite{huang2010large} and~\cite[Section 7.1]{CarmonaDelarue_book_II}. In major-minor MFG, there is only one major player (instead of a population of cooperative agents). This major agent's state directly affect the cost function and/or dynamics of the minor agents. If the state dynamics of the major agent has a noise, then this noise behaves as a common noise for the minor agents. However, in the current setup the distribution of the cooperative agents affect the competitive agents.}

\section{Main results}
\label{sec:main-results}

\subsection{Mixed individual}

\begin{theorem}[FBSDE characterization of equilibria]
    A control $\hat{\balpha}$ is an MI-MFNE control profile (see Definition~\ref{def:mixed_individual_mfg_nash}) if and only if:
    \small
    \begin{equation}
    \label{eq:mi-BR-formula}
        \hat \alpha_t
        = -\frac{b_\alpha Y_t}{c_\alpha}   
    \end{equation}
    \normalsize
    where $(\bX, \bY, \bZ)=(X_t, Y_t, Z_t)_{t\in [0,T]}$ solve the following forward-backward stochastic differential equation (FBSDE) system:
    \small
    \begin{equation}
    \label{eq:MI_FBSDE}
    \begin{aligned}
        &dX_t = \Big(-\frac{(b_\alpha)^2}{c_\alpha} Y_t +b_X X_t+ b_\mu  \bar{X}_t \Big) dt + \sigma dW_t,\ X_0 \sim \mu_0, 
        \\[1mm]
        &dY_t = -\big(b_X Y_t + c_X X_t + b_\mu(1-\lambda)\bar{Y}_t + c_\mu (1-\lambda)\bar{X}_t\big)dt\\
        &\qquad\qquad+ Z_t dW_t, \quad Y_T =c_T X_T.  
    \end{aligned}
    \end{equation}
    \normalsize
\end{theorem}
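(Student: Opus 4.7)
The plan is to apply Pontryagin's stochastic maximum principle for McKean--Vlasov (MV) control problems to the best-response sub-problem in the definition of an MI-MFNE. Although freezing the exogenous population mean $\bar{\bX}$ turns the equilibrium condition into a standard optimization problem for the representative agent, that problem is itself of MV type, because both the drift and the running cost depend on the agent's \emph{own} marginal mean $\bar X_t^{\balpha}=\EE[X_t^{\balpha}]$ through the factor $(1-\lambda)$. The ``iff'' will follow from the first-order MV--SMP conditions together with joint convexity of the Hamiltonian.

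For the necessity direction, I would introduce the Hamiltonian
\begin{equation*}
H(t,x,\alpha,y,m) = y\bigl[b_\alpha\alpha+b_X x+b_\mu\lambda\bar X_t+b_\mu(1-\lambda)\bar m\bigr] + \tfrac{c_\alpha}{2}\alpha^2+\tfrac{c_X}{2}x^2+\tfrac{c_\mu}{2}\lambda\bar X_t^2+\tfrac{c_\mu}{2}(1-\lambda)\bar m^2,
\end{equation*}
with $\bar m=\int x'\,dm(x')$ and $\bar X_t$ treated as an exogenous input. Strict convexity in $\alpha$ gives the pointwise minimizer $\hat\alpha_t=-b_\alpha Y_t/c_\alpha$, which is \eqref{eq:mi-BR-formula}. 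The MV--Pontryagin adjoint equation then takes the form
\begin{equation*}
dY_t = -\bigl\{\partial_x H + \tilde\EE\bigl[\partial_\mu H(\tilde X_t,\tilde\alpha_t,\tilde Y_t,\mu_t^{\balpha})(X_t)\bigr]\bigr\}\,dt + Z_t\,dW_t,\qquad Y_T=c_T X_T,
\end{equation*}
where $(\tilde X,\tilde\alpha,\tilde Y)$ is an independent copy of $(X,\alpha,Y)$. Since the measure dependence enters only through the scalar $\bar m$, the Lions derivative $\partial_\mu H$ is independent of $x$ and equals $b_\mu(1-\lambda)y+c_\mu(1-\lambda)\bar m$, so its tilded expectation contributes $b_\mu(1-\lambda)\bar Y_t+c_\mu(1-\lambda)\bar X_t^{\balpha}$ to the drift. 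Together with $\partial_x H=b_X y+c_X x$ and the equilibrium consistency $\bar X_t=\bar X_t^{\hat\balpha}$, this reproduces the backward equation of \eqref{eq:MI_FBSDE}.

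For sufficiency, treating $y$ as a parameter, $H$ decomposes into an affine part plus the positive semidefinite quadratic $\tfrac{c_\alpha}{2}\alpha^2+\tfrac{c_X}{2}x^2+\tfrac{c_\mu}{2}(1-\lambda)\bar m^2$, hence is jointly convex in $(x,\alpha,\bar m)$; the drift is linear in $(x,\alpha,\bar m)$ and the terminal cost $\tfrac{c_T}{2}x^2$ is convex in $x$ with no $m$-dependence. These are exactly the hypotheses of the MV--SMP sufficient criterion, so any $(X,Y,Z)$ solving \eqref{eq:MI_FBSDE} together with the feedback \eqref{eq:mi-BR-formula} furnishes a genuine minimizer of $J(\cdot;\hat{\bar\bX})$ against $\hat{\bar X}_t=\EE[X_t]$, and the consistency condition holds by construction, yielding an MI-MFNE.

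The main technical care is in the MV adjoint bookkeeping: the altruism coefficient $(1-\lambda)$ couples the agent's own marginal mean $\bar X_t^{\balpha}$ into the cost alongside the exogenous population mean $\bar X_t$, so one must keep these two scalars formally distinct during the Lions-derivative computation and the pointwise $\alpha$-minimization, and identify them only at the very end via the MFG fixed-point condition. Once this separation is respected, computing the Lions derivative of a mean-dependent functional and checking the convexity conditions are routine.
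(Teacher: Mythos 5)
Your proposal is correct and follows essentially the same route as the paper: freeze the exogenous mean $\bar{\bX}$, treat the best response as a McKean--Vlasov control problem because of the $(1-\lambda)$ dependence on the agent's own mean, apply the Pontryagin maximum principle with the extra adjoint term $\EE[\partial_{\bar x^\alpha}H]$ (your Lions-derivative bookkeeping reduces to exactly this since the measure enters only through its mean), and impose the fixed point $\bar X_t=\bar X_t^{\hat\balpha}$ at the end. Your explicit convexity check for the sufficiency direction is a welcome detail that the paper delegates to the cited reference, but it does not constitute a different approach.
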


\begin{proof}
The proof is based on i) characterizing the solution of a mean field control problem with a given mean field $\bar \bX$ and ii) realizing that at the equilibrium we should have $\bar{X}_t$ is equal to the mean field introduced by mean field control, $\bar{X}_t^{\hat{\balpha}}$ for all $t\in[0,T]$.
We have the following Hamiltonian:
\small
\begin{equation*}
\begin{aligned}
    &H(t, x, \alpha, \bar x, \bar x^{\alpha}, y)= \big(b_\alpha \alpha +b_X x^{\balpha}+ b_\mu  \big(\lambda \bar{x} + (1-\lambda) \bar{x}^{\balpha}\big)\big)y \\
    &+ \frac{c_\alpha}{2} \alpha^2 + \frac{c_X}{2}(x^{\balpha})^2 + 
    \frac{c_\mu}{2}\left(\lambda (\bar x)^2 +(1-\lambda)(\bar{x}^{\balpha})^2\right)
\end{aligned}
\end{equation*}
\normalsize
and the optimal control is given as the minimizer of the Hamiltonian:
\small
\begin{equation*}
    \hat{\alpha}_t = -\frac{b_\alpha Y^{\hat\balpha}_t}{c_\alpha}
\end{equation*}
\normalsize
where $(\bX^{\hat\balpha}, \bY^{\hat\balpha}, \bZ^{\hat\balpha})$ satisfies the following FBSDE system characterizing the Mean Field Control solution given $\bar{\bX}$ (e.g.~\cite[Chapter 6.7]{CarmonaDelarue_book_I}): 
\small
\begin{equation*}
\begin{aligned}
    &dX^{\hat\balpha}_t = \Big(-\frac{(b_\alpha)^2}{c_\alpha} Y^{\hat\balpha}_t +b_X X^{\hat\balpha}_t+ \\
    &\qquad b_\mu (\lambda \bar{X}_t+(1-\lambda) \bar{X}^{\hat\balpha}_t )\Big) dt + \sigma dW_t,\ X_0 \sim \mu_0, 
    \\[1mm]
    &dY^{\hat\balpha}_t = -\big(b_X Y^{\hat\balpha}_t + c_X X^{\hat\balpha}_t + b_\mu(1-\lambda)\bar{Y}^{\hat\balpha}_t\\
    &\qquad+ c_\mu (1-\lambda)\bar{X}^{\hat\balpha}_t\big)dt
    + Z_t^{\hat\balpha} dW_t, \quad Y_T^{\hat\balpha} =c_T X_T^{\hat\balpha}. 
\end{aligned}  
\end{equation*}
\normalsize
The equation for the adjoint process is written as $dY^{\hat\balpha}_t = -\big(\partial_x H(t, X^{\hat\balpha}_t, \alpha, \bar X_t, \bar X_t^{\alpha}, Y^{\hat\balpha}_t) + \EE[\partial_{\bar x^\alpha}H(t, X^{\hat\balpha}_t, \alpha, \bar X_t, \bar X_t^{\alpha}, Y^{\hat\balpha}_t)]\big) dt+ Z_t^{\hat\balpha} dW_t$ with $Y_T^{\hat\balpha} = \partial_x \frac{c_T}{2} (X^{\hat\balpha}_T)^2$ by using the stochastic Pontryagin maximum principle. At the end, we need to impose our fixed point condition $\bar{X}_t=\bar{X}^{\hat\balpha}_t$ for all $t \in[0,T]$ to find the MI-MFNE. In order to simplify the notations, we also drop the superscript $\balpha$ and conclude our result. We would like to emphasize that different than a regular MFG, the effect of the individuals deviation on the $\bar{X}^{\balpha}$ should be taken into account, as in MFC problems. Hence, the adjoint process dynamics includes a partial derivative with respect to $\bar{x}^\alpha$.
\end{proof}

\begin{proposition}[FBODE characterization of MI-MFNE]
\label{prop:mi_FBODE}
Assume there exists an $\RR \times \RR \times \RR\times \RR$-valued function $t \mapsto (A_t, B_t, C_t, \bar{X}_t)$ solving the following forward-backward ordinary differential equation (FBODE) system:
\small
\begin{align}
        &{\dot A_t} - \frac{(b_\alpha)^2}{c_\alpha} A_t^2 + 2b_X A_t +c_X=0, \label{eq:FBODE_A} \\
        &\dot B_t  - \frac{(b_\alpha)^2}{c_\alpha} B_t^2 - 2\frac{(b_\alpha)^2}{c_\alpha} A_tB_t +(2b_X+2b_\mu -\lambda b_\mu)B_t\nonumber \\ 
        &+b_\mu(1-\lambda) A_t + c_\mu(1-\lambda)=0, \label{eq:FBODE_B}\\
       &\dot C_t  +\left(- \frac{(b_\alpha)^2}{c_\alpha} (A_t+B_t) +b_X + b_\mu(1-\lambda)\right)C_t=0, \label{eq:FBODE_C}\\
       &\dot{\bar{X}}_t = \left(- \frac{(b_\alpha)^2}{c_\alpha} (A_t+B_t) +b_X + b_\mu\right) \bar{X}_t - \frac{(b_\alpha)^2}{c_\alpha} C_t, \label{eq:FBODE_BarX}\\
       &A_T =c_T, \quad B_T = 0,\quad C_T=0, \quad \bar{X}_0 = \bar \mu_0 .\label{eq:FBODE_boundarycond}
\end{align}
\normalsize
Then 
\small
\begin{equation}
    \label{eq:equilibrium-ctrl-minors-only}
\hat \alpha_t= -\frac{b_\alpha}{c_\alpha} (A_tX_t + B_t \bar{X}_t +C_t) 
\end{equation}
\normalsize
is the MI-MFNE control. 
\end{proposition}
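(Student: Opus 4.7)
The plan is to verify the proposition by reducing the FBSDE system~\eqref{eq:MI_FBSDE} to the FBODE system~\eqref{eq:FBODE_A}--\eqref{eq:FBODE_boundarycond} via a linear Gaussian ansatz on the adjoint process. Concretely, I would look for a solution of~\eqref{eq:MI_FBSDE} of the form
\begin{equation*}
Y_t = A_t X_t + B_t \bar X_t + C_t,
\end{equation*}
where $A_t, B_t, C_t$ are deterministic scalar functions of time and $\bar X_t = \EE[X_t]$. Under this ansatz one has $\bar Y_t = (A_t+B_t)\bar X_t + C_t$, and the best-response formula~\eqref{eq:mi-BR-formula} from the FBSDE characterization theorem becomes exactly~\eqref{eq:equilibrium-ctrl-minors-only}. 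Hence it suffices to exhibit $(\bX, \bY, \bZ)$ solving~\eqref{eq:MI_FBSDE}, with $Y_t$ of the claimed linear form, whenever $(A_t, B_t, C_t, \bar X_t)$ solves~\eqref{eq:FBODE_A}--\eqref{eq:FBODE_boundarycond}.

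First I would take expectations in the forward equation of~\eqref{eq:MI_FBSDE} and substitute $\bar Y_t = (A_t+B_t)\bar X_t + C_t$ to obtain an ordinary differential equation for $\bar X_t$ that coincides with~\eqref{eq:FBODE_BarX}, with the initial condition $\bar X_0 = \bar\mu_0$ inherited from $X_0\sim\mu_0$. Next I would differentiate the ansatz by It\^o's formula, treating $\bar X_t$ as a deterministic (hence finite-variation) path, substitute $dX_t$ from~\eqref{eq:MI_FBSDE} together with the expression for $\dot{\bar X}_t$ derived above, and compare the resulting dynamics with the backward equation in~\eqref{eq:MI_FBSDE}. Matching the diffusion coefficients immediately gives $Z_t = A_t \sigma$. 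Matching drift coefficients of $X_t$, of $\bar X_t$, and of the constant term separately yields the three ODEs~\eqref{eq:FBODE_A}, \eqref{eq:FBODE_B}, and~\eqref{eq:FBODE_C}. The terminal condition $Y_T = c_T X_T$, evaluated via the ansatz and holding $\PP$-a.s., pins down $A_T = c_T$ by matching the $X_T - \EE[X_T]$ part and then $B_T = 0$, $C_T = 0$ from the remaining deterministic identity, recovering~\eqref{eq:FBODE_boundarycond}.

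The step requiring the most care is the drift identification, since the backward-equation term $b_\mu(1-\lambda)\bar Y_t = b_\mu(1-\lambda)\bigl[(A_t+B_t)\bar X_t + C_t\bigr]$ contributes simultaneously to the $\bar X_t$ coefficient (feeding into both the $A_t$ and $B_t$ pieces of~\eqref{eq:FBODE_B}) and to the constant coefficient in~\eqref{eq:FBODE_C}. Similarly, the It\^o contribution $A_t \cdot [-(b_\alpha)^2/c_\alpha]\, Y_t$ produces the Riccati $A_t^2$ term in~\eqref{eq:FBODE_A} together with the cross term $2 A_t B_t$ in~\eqref{eq:FBODE_B}, and the $B_t\, d\bar X_t$ piece generates the $B_t^2$ and $A_t B_t$ terms as well. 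Once the coefficients are verified, the triple $(X_t,\, A_t X_t + B_t \bar X_t + C_t,\, A_t\sigma)_{t\in[0,T]}$ solves~\eqref{eq:MI_FBSDE}, and the FBSDE characterization theorem then ensures that $\hat\alpha_t = -(b_\alpha/c_\alpha)(A_t X_t + B_t \bar X_t + C_t)$ is the MI-MFNE control; no additional fixed-point step is required, since the consistency condition $\bar X_t = \EE[X_t]$ has been built into the system through~\eqref{eq:FBODE_BarX}.
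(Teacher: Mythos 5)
Your proposal follows essentially the same route as the paper's proof: the linear ansatz $Y_t = A_t X_t + B_t \bar X_t + C_t$, It\^o differentiation combined with the forward dynamics of $X_t$ and $\bar X_t$, and term-by-term matching against the backward equation of~\eqref{eq:MI_FBSDE}, with the ODE for $\bar X_t$ obtained by taking expectations. You additionally spell out the identification $Z_t = A_t\sigma$ and the terminal-condition matching, which the paper leaves implicit, but the argument is the same.
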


\begin{proof}
    Since our problem is in linear-quadratic form, we propose the following ansatz $Y_t=A_tX_t +B_t\bar{X}_t +C_t$. Then, we have:
    \small
    \begin{equation}
    \label{eq:ansatz_backward}
        \begin{aligned}
            dY_t =& \dot{A}_t X_tdt + A_t dX_t + \dot{B}_t\bar{X}_t dt + B_td\bar{X}_t+\dot{C}_tdt\\
            =& \dot{A}_t X_tdt + A_t \big(\big(-\frac{(b_\alpha)^2}{c_\alpha} Y_t +b_X X_t+ 
            b_\mu  \bar{X}_t \big) dt + \sigma dW_t \big)\\
             &
             + \dot{B}_t\bar{X}_tdt+ B_t\big(\big(-\frac{(b_\alpha)^2}{c_\alpha} \bar{Y}_t +(b_X + 
             b_\mu)  \bar{X}_t\big) dt\big) +\dot{C}_tdt\\
     =&\dot{A}_t X_tdt + A_t \big(\big(-\frac{(b_\alpha)^2}{c_\alpha} (A_tX_t +B_t\bar{X}_t +C_t) +b_X X_t\\
     &+ 
     b_\mu  \bar{X}_t \big) dt + \sigma dW_t \big)
     + \dot{B}_t\bar{X}_tdt+\dot{C}_tdt\\
     &+B_t\big(\big(-\frac{(b_\alpha)^2}{c_\alpha} ((A_t+B_t)\bar{X}_t +C_t) +(b_X + 
     b_\mu)  \bar{X}_t\big) dt\big),
        \end{aligned}
    \end{equation}
    \normalsize
    where in the second equality we plugged in $dX_t$ and $d \bar{X}_t$ terms using the forward equation in~\eqref{eq:MI_FBSDE} and in the third equality we plugged in $Y_t$ and $\bar Y_t$ ansatz forms to have the final form. Finally, we can match the terms in equation~\eqref{eq:ansatz_backward} with the backward equation in~\eqref{eq:MI_FBSDE} to end up with the ODEs for the $A_t$, $B_t$ and $C_t$. The dynamics of $\bar{X}_t$ is also acquired after plugging in the ansatz for $Y_t$ in the forward equation in~\eqref{eq:MI_FBSDE} and taking the expectation.
\end{proof}

\begin{theorem}[Existence \& Uniqueness of a solution]
\label{the:exist_mi}
If the following condition holds, then there exists a unique MI-MFNE: 
\small
\begin{equation*}
\begin{aligned}
    &b_\mu \frac{-c_X\big(e^{(\delta^{+}-\delta^{-})(T-t)}-1\big)-c_T\big(\delta^{+} e^{(\delta^{+}-\delta^{-})(T-t)}-\delta^{-}\big)}{\big(\delta^{-} e^{(\delta^{+}-\delta^{-})(T-t)}-\delta^{+}\big)-c_T \frac{(b_\alpha)^2}{c_\alpha}\big(e^{(\delta^{+}-\delta^{-})(T-t)}-1\big)}\\
    &
    \qquad+c_\mu\geq 0, \qquad \forall t\in[0,T],
\end{aligned}
\end{equation*}
\normalsize
where $\delta^{\pm}=  b_X \pm \sqrt{b_X^2 + c_X (b_\alpha)^2/c_\alpha}$. 
\end{theorem}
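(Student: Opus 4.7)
The plan is to show that, under the stated hypothesis, the FBODE system \eqref{eq:FBODE_A}--\eqref{eq:FBODE_boundarycond} of Proposition~\ref{prop:mi_FBODE} admits a unique global solution on $[0,T]$. Proposition~\ref{prop:mi_FBODE} then supplies the MI-MFNE via \eqref{eq:equilibrium-ctrl-minors-only}, while the linear-quadratic structure of the FBSDE \eqref{eq:MI_FBSDE} forces any equilibrium into the affine ansatz $Y_t = A_t X_t + B_t \bar X_t + C_t$, so uniqueness of the FBODE solution will yield uniqueness of the equilibrium.

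\textbf{Step 1 (explicit solution for $A_t$).} Equation \eqref{eq:FBODE_A} is a decoupled scalar Riccati with constant coefficients and terminal datum $A_T = c_T > 0$. Linearizing via $A_t = -\frac{c_\alpha}{(b_\alpha)^2}\dot u_t/u_t$ reduces it to a second-order linear ODE with characteristic roots $-\delta^\mp$, which are real since $c_X (b_\alpha)^2/c_\alpha > 0$. Solving explicitly and imposing $A_T = c_T$ yields a closed form that simplifies exactly to the ratio appearing in the theorem's hypothesis. Hence $A_t$ is globally defined and strictly positive on $[0,T]$, and the hypothesis takes the compact form $b_\mu A_t + c_\mu \geq 0$ pointwise on $[0,T]$.

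\textbf{Step 2 (global existence of $B_t$ --- the main obstacle).} Equation \eqref{eq:FBODE_B} is a scalar Riccati with time-varying coefficients, terminal data $B_T = 0$, and forcing term proportional to $(1-\lambda)(b_\mu A_t + c_\mu)$, which is non-negative by hypothesis. Variable-coefficient Riccati ODEs can blow up in finite backward time, and the sign hypothesis is precisely what rules this out. My preferred route is to take expectations in the FBSDE \eqref{eq:MI_FBSDE}, obtaining a linear forward-backward ODE system in $(\bar X_t, \bar Y_t)$ whose scalar decoupling ansatz $\bar Y_t = \bar A_t \bar X_t$ yields a constant-coefficient Riccati for $\bar A_t$ with $\bar A_T = c_T > 0$ and strictly negative constant term $-(c_X + (1-\lambda)c_\mu)$. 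This Riccati is globally well-posed on $[0,T]$ by the analysis of Step 1; then $B_t := \bar A_t - A_t$ is globally defined on $[0,T]$ and a direct substitution shows it solves \eqref{eq:FBODE_B}. An alternative is a direct backward comparison argument: using the non-negative forcing to bound $\dot B_t$ near the barrier $B = 0$ and the dominant quadratic term $\frac{(b_\alpha)^2}{c_\alpha} B_t^2$ to prevent escape to infinity, one traps $B_t$ in a bounded interval throughout $[0,T]$.

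\textbf{Step 3 ($C_t$, $\bar X_t$, and uniqueness).} Given $A_t$ and $B_t$, equation \eqref{eq:FBODE_C} is a homogeneous linear ODE for $C_t$ with zero terminal data, hence $C_t \equiv 0$. Equation \eqref{eq:FBODE_BarX} is then a linear forward ODE with continuous, bounded coefficients and initial datum $\bar X_0 = \bar\mu_0$, uniquely solvable by Picard--Lindelöf. Proposition~\ref{prop:mi_FBODE} yields existence of the MI-MFNE. For uniqueness, any MI-MFNE solves the FBSDE \eqref{eq:MI_FBSDE} by the preceding FBSDE characterization; the linear-quadratic structure together with the terminal condition $Y_T = c_T X_T$ forces the affine ansatz above, whose coefficients are uniquely determined by the FBODE system just solved, concluding the proof.
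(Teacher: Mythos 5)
Your overall architecture (explicit $A_t$ first, then $B_t$, then the linear equations for $C_t$ and $\bar X_t$) is the same as the paper's, and Steps 1 and 3 are correct --- including the observation that $C_t\equiv 0$, which the paper does not bother to record. The gap is in your preferred route for Step 2. If you actually substitute $B_t:=\bar A_t-A_t$ into \eqref{eq:FBODE_B}, the quadratic and cross terms cancel as you expect, but the first-order terms do not: writing $q=(b_\alpha)^2/c_\alpha$, the mean system $\dot{\bar X}_t=-q\bar Y_t+(b_X+b_\mu)\bar X_t$, $\dot{\bar Y}_t=-\big((b_X+(1-\lambda)b_\mu)\bar Y_t+(c_X+(1-\lambda)c_\mu)\bar X_t\big)$ gives $\dot{\bar A}_t-q\bar A_t^2+(2b_X+2b_\mu-\lambda b_\mu)\bar A_t+c_X+(1-\lambda)c_\mu=0$, and plugging $\bar A_t-A_t$ into the left-hand side of \eqref{eq:FBODE_B} then leaves a residual $-b_\mu A_t$ rather than $0$ (the identity would require the coefficient of $A_t$ in \eqref{eq:FBODE_B} to be $(2-\lambda)b_\mu$, not $(1-\lambda)b_\mu$). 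So ``a direct substitution shows it solves \eqref{eq:FBODE_B}'' is false for the system you are asked to solve. There is also a structural red flag you should have caught: your constant-coefficient Riccati for $\bar A_t$ has positive quadratic coefficient, positive free term $c_X+(1-\lambda)c_\mu$ and terminal value $c_T>0$, hence is globally solvable on $[0,T]$ with no condition whatsoever; your argument therefore never uses the hypothesis $b_\mu A_t+c_\mu\ge 0$ and, if it were correct, would render the theorem's condition vacuous. That alone should have prompted a re-check.

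The paper handles Step 2 differently: it plugs the explicit $A_t$ into \eqref{eq:FBODE_B}, observes that the resulting scalar Riccati has continuous time-varying coefficients and free term $(1-\lambda)(b_\mu A_t+c_\mu)$, and invokes the boundedness/existence criteria of Jacobson and Freiling, for which the stated sign condition is precisely what is needed to exclude finite-time blow-up of the backward flow; uniqueness of $B_t$, and then of $C_t$ and $\bar X_t$, follows as in your Step 3. Your ``alternative'' comparison argument is essentially a self-contained version of this and is the route you should make primary: in reversed time $\tau=T-t$ the equation reads $\tfrac{d}{d\tau}\tilde B=-q\tilde B^2-\tilde\ell(\tau)\tilde B+\tilde f(\tau)$ with $\tilde B_0=0$ and $\tilde f\ge 0$ by hypothesis, so comparison with the zero solution keeps $\tilde B\ge 0$, and the negative quadratic term then bounds it above by the largest root of $-qa^2+\|\tilde\ell\|_\infty a+\|\tilde f\|_\infty=0$. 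Fleshing that out gives the paper's conclusion without the citation; as written, it is only a sketch, and the main route it is meant to back up does not close.
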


\begin{proof}
From Proposition~\ref{prop:mi_FBODE}, we know that the MI-MFNE is characterized by solving the given FBODE system. Therefore, we can focus existence and uniqueness result of the FBODE system given in Proposition~\ref{prop:mi_FBODE}.
    We first realize that both the equations for $(A_t)_t$ and $(B_t)_t$ are Riccati equations. The Riccati equation for $(A_t)_t$~\eqref{eq:FBODE_A} has a unique solution which is bounded and continuous under the given model parameter assumptions ($c^{\rm C}_\alpha, c^{\rm C}_X, c^{\rm C}_\mu, c^{\rm C}_T>0$, and $b^{\rm C}_\alpha, b^{\rm C}_X, b^{\rm C}_\mu \in \RR$) and it can be explicitly written as:
    \small
    \begin{equation*}
        A_t = \frac{-c_X\big(e^{(\delta^{+}-\delta^{-})(T-t)}-1\big)-c_T\big(\delta^{+} e^{(\delta^{+}-\delta^{-})(T-t)}-\delta^{-}\big)}{\big(\delta^{-} e^{(\delta^{+}-\delta^{-})(T-t)}-\delta^{+}\big)-c_T \frac{(b_\alpha)^2}{c_\alpha}\big(e^{(\delta^{+}-\delta^{-})(T-t)}-1\big)},
    \end{equation*}
    \normalsize
    for all $t\in[0,T]$ and where $\delta^{+}$ and $\delta^{-}$ are as introduced in Theorem~\ref{the:exist_mi}~\cite[Chapter 2]{CarmonaDelarue_book_I},~\cite{JACOBSON1970258}. Once the first Riccati equation is solved, the solution $(A_t)_t$ can be plugged in the Riccati equation for $(B_t)_t$~\eqref{eq:FBODE_B}. Then, under the condition given in Theorem~\ref{the:exist_mi}, there exists a unique continuous solution to the Riccati equation for $(B_t)_t$,~\cite{FREILING2002243}~\cite{JACOBSON1970258}. After the unique and continuous solutions to equations~\eqref{eq:FBODE_A} and~\eqref{eq:FBODE_B} are found, they can be plugged into the first order linear differential equation for $(C_t)_t$~\eqref{eq:FBODE_C}. Since the coefficients are continuous, this ODE has a unique continuous solution (see e.g., Picard-Lindel\"of Theorem). Finally, we can plug in $(A_t)_t, (B_t)_t$, and $(C_t)_t$ in the differential equation for $(\bar{X}_t)_t$. Since these are continuous, the coefficients of the first order linear ODE for $(\bar{X}_t)_t$ are continuous, which gives the existence and uniqueness for~\eqref{eq:FBODE_BarX}. In turn, these conclude the existence and uniqueness of the solution of the FBODE system.
\end{proof}

\subsection{Mixed population}

\begin{theorem}[FBSDE characterization of equilibria]
    Controls $\hat{\balpha}^{\rm NC}$ and $\hat{\balpha}^{\rm C}$ (respectively for the non-cooperative and cooperative agents) are MP-MFE control profiles (see Definition~\ref{def:mixedpop_mfg_nash}) if and only if:
    \small
    \begin{equation}
    \label{eq:mp-BR-formula}
        \hat \alpha^{\rm NC}_t
        = -\frac{b^{\rm NC}_\alpha Y^{\rm NC}_t}{c^{\rm NC}_\alpha}, \qquad \hat \alpha^{\rm C}_t
        = -\frac{b^{\rm C}_\alpha Y^{\rm C}_t}{c^{\rm C}_\alpha},   
    \end{equation}
    \normalsize
    where $(\bX^{\rm NC}, \bY^{\rm NC}, \bZ^{\rm NC}, \bX^{\rm C}, \bY^{\rm C}, \bZ^{\rm C})=$ $(X^{\rm NC}_t, Y^{\rm NC}_t,$ \\$ Z^{\rm NC}_t, X^{\rm C}_t, Y^{\rm C}_t, Z^{\rm C}_t)_{t\in [0,T]}$ solve the following forward-backward stochastic differential equation (FBSDE) system:
    \small
    \begin{equation}
    \label{eq:mp_fbsde}
    \begin{aligned}
        &dX^{\rm NC}_t = \Big(-\frac{(b^{\rm NC}_\alpha)^2}{c^{\rm NC}_\alpha} Y^{\rm NC}_t +b^{\rm NC}_X X^{\rm NC}_t\\
        &+ b^{\rm NC}_\mu  (p \bar X_t^{\rm NC} +(1-p)\bar{X}^{\rm C}_t) \Big) dt + \sigma dW^{\rm NC}_t,
        \\[1mm]
        &dY^{\rm NC}_t = -\big(b^{\rm NC}_X Y^{\rm NC}_t + c^{\rm NC}_X X^{\rm NC}_t \big)dt+ Z^{\rm NC}_t dW^{\rm NC}_t,   \\[1mm]
        &dX^{\rm C}_t = \Big(-\frac{(b^{\rm C}_\alpha)^2}{c^{\rm C}_\alpha} Y^{\rm C}_t +b^{\rm C}_X X^{\rm C}_t\\
        &+ b^{\rm C}_\mu  (p \bar X_t^{\rm NC} +(1-p)\bar{X}^{\rm C}_t) \Big) dt + \sigma dW^{\rm C}_t,
        \\[1mm]
        &dY^{\rm C}_t = -\big(b^{\rm C}_X Y^{\rm C}_t + c^{\rm C}_X X^{\rm C}_t + b^{\rm C}_\mu (1-p)\bar Y_t^{\rm C}\\
        &+2c_\mu^{\rm C}(1-p)(p\bar{X}_t^{\rm NC}+(1-p)\bar X^{\rm C}_t) \big)dt+ Z^{\rm C}_t dW^{\rm C}_t,\\[1mm]  
        & X^{\rm NC}_0 \sim \mu^{\rm NC}_0, X^{\rm C}_0 \sim \mu^{\rm C}_0, \ Y^{\rm NC}_T =c^{\rm NC}_T X^{\rm NC}_T,  Y^{\rm C}_T =c^{\rm C}_T X^{\rm C}_T.
    \end{aligned}
    \end{equation}
    \normalsize
\end{theorem}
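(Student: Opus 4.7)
The plan is to mirror the proof of the mixed individual case by splitting the equilibrium characterization into two decoupled sub-problems: a standard stochastic control problem for the representative non-cooperative agent and a McKean-Vlasov control problem for the cooperative coalition. At the end I would close the coupled system by imposing the consistency conditions of Definition~\ref{def:mixedpop_mfg_nash}. In both sub-problems I would invoke the stochastic Pontryagin maximum principle, in its classical form for the non-cooperative side and in its McKean-Vlasov form (cf.~\cite[Ch.~6.7]{CarmonaDelarue_book_I}) for the cooperative side.

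For the non-cooperative representative agent, the first step is to freeze both mean-field trajectories $\bar X^{\rm NC}$ and $\bar X^{\rm C}$ and to introduce the Hamiltonian $H^{\rm NC}(t,x,\alpha,\bar x^{\rm NC}, \bar x^{\rm C}, y) = (b^{\rm NC}_\alpha \alpha + b^{\rm NC}_X x + b^{\rm NC}_\mu(p\bar x^{\rm NC}+(1-p)\bar x^{\rm C}))y + \tfrac{c^{\rm NC}_\alpha}{2}\alpha^2 + \tfrac{c^{\rm NC}_X}{2}x^2 + \tfrac{c^{\rm NC}_\mu}{2}(p\bar x^{\rm NC}+(1-p)\bar x^{\rm C})^2$. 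Pointwise minimization of $H^{\rm NC}$ in $\alpha$ yields the feedback $\hat\alpha^{\rm NC}_t = -b^{\rm NC}_\alpha Y^{\rm NC}_t/c^{\rm NC}_\alpha$, and the classical Pontryagin principle produces the adjoint dynamics $dY^{\rm NC}_t = -\partial_x H^{\rm NC}\,dt + Z^{\rm NC}_t dW^{\rm NC}_t = -(b^{\rm NC}_X Y^{\rm NC}_t + c^{\rm NC}_X X^{\rm NC}_t)\,dt + Z^{\rm NC}_t dW^{\rm NC}_t$ with $Y^{\rm NC}_T = c^{\rm NC}_T X^{\rm NC}_T$. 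No McKean-Vlasov correction enters here because, in the mean field limit, an individual non-cooperative agent has negligible influence on the population averages.

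For the cooperative coalition I would freeze only $\bar X^{\rm NC}$, form the analogous Hamiltonian $H^{\rm C}$, and apply the McKean-Vlasov maximum principle so that the adjoint now incorporates $\EE[\partial_{\bar x^{\rm C}} H^{\rm C}]$. Minimization in $\alpha$ again gives $\hat\alpha^{\rm C}_t = -b^{\rm C}_\alpha Y^{\rm C}_t/c^{\rm C}_\alpha$, while differentiating the drift part of $H^{\rm C}$ in $\bar x^{\rm C}$ yields the extra term $(1-p)b^{\rm C}_\mu\bar Y^{\rm C}_t$, and differentiating the quadratic running cost yields a term proportional to $c^{\rm C}_\mu(1-p)(p\bar X^{\rm NC}_t + (1-p)\bar X^{\rm C}_t)$; together these match the backward equation in~\eqref{eq:mp_fbsde}. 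Imposing the fixed-point identifications $\hat{\bar X}^{\rm NC}_t = \bar X^{\hat\balpha^{\rm NC}}_t$ and $\hat{\bar X}^{\rm C}_t = \bar X^{\hat\balpha^{\rm C}}_t$ then couples the two sub-systems into the single FBSDE~\eqref{eq:mp_fbsde}, and the converse implication follows from sufficiency of Pontryagin's condition in this linear-quadratic convex setting. I expect the main bookkeeping obstacle to be the careful tracking of the $(1-p)$ factors in the cooperative adjoint, which arise from differentiating $p\bar x^{\rm NC} + (1-p)\bar x^{\rm C}$ with respect to $\bar x^{\rm C}$ only; by contrast the non-cooperative side is mechanically simpler because no McKean-Vlasov derivative enters at all.
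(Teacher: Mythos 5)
Your proposal follows essentially the same route as the paper: the classical Pontryagin principle for the representative non-cooperative agent (who faces an MFG with both mean fields frozen, hence no McKean--Vlasov correction) and the McKean--Vlasov Pontryagin principle for the representative cooperative agent (who solves an MFC given $\bar{X}^{\rm NC}$, hence the extra $\partial_{\bar x^{\rm C}}H^{\rm C}$ terms), with the two adjoint systems coupled at the end through the consistency conditions. One small bookkeeping note: differentiating $\tfrac{c^{\rm C}_\mu}{2}\left(p\bar x^{\rm NC}+(1-p)\bar x^{\rm C}\right)^2$ in $\bar x^{\rm C}$ gives $c^{\rm C}_\mu(1-p)\left(p\bar x^{\rm NC}+(1-p)\bar x^{\rm C}\right)$ exactly as you compute, whereas the displayed backward equation carries an additional factor of $2$ on that term, so your derivation matches the stated FBSDE only up to that constant (the discrepancy lies in the statement, not in your argument).
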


\begin{proof} 
For the non-cooperative and cooperative representative agents we can write their respective Hamiltonians as follows:
\small
\begin{equation*}
    \begin{aligned}
        &H^{\rm NC}(t, x^{\rm NC}, \alpha^{\rm NC}, \bar x^{\rm NC}, \bar x^{\rm C}, y^{\rm NC})= \big(b^{\rm NC}_\alpha \alpha^{\rm NC} + b^{\rm NC}_X x^{\rm NC} \\
        &\qquad+ b^{\rm NC}_\mu  \big(p\bar{x}^{\rm NC}_t + (1-p) \bar{x}_t^{\rm C}\big)\big)y^{\rm NC} +\frac{c^{\rm NC}_\alpha}{2} (\alpha^{\rm NC})^2 \\
        &\qquad + \frac{c^{\rm NC}_X}{2}(x^{\rm NC})^2+\frac{c^{\rm NC}_\mu}{2}\left(p \bar x^{\rm NC} +(1-p)\bar{x}^{\rm C}\right)^2  \\
        &H^{\rm C}(t, x^{\rm C}, \alpha^{\rm C}, \bar x^{\rm C}, \bar x^{\rm NC}, y^{\rm C})= \big(b^{\rm C}_\alpha \alpha^{\rm C} + b^{\rm C}_X x^{\rm C} \\
        &\qquad+ b^{\rm C}_\mu  \big(p\bar{x}^{\rm NC}_t + (1-p) \bar{x}_t^{\rm C}\big)\big)y^{\rm NC} +\frac{c^{\rm C}_\alpha}{2} (\alpha^{\rm C})^2 \\
        &\qquad + \frac{c^{\rm C}_X}{2}(x^{\rm C})^2+\frac{c^{\rm C}_\mu}{2}\left(p \bar x^{\rm NC} +(1-p)\bar{x}^{\rm C}\right)^2.
    \end{aligned}
\end{equation*}
\normalsize
Then, the equilibrium controls will be given as the minimizers of the Hamiltonian:
\small
    \begin{equation}
        \hat \alpha^{\rm NC}_t
        = -\frac{b^{\rm NC}_\alpha Y^{\rm NC}_t}{c^{\rm NC}_\alpha}, \qquad \hat \alpha^{\rm C}_t
        = -\frac{b^{\rm C}_\alpha Y^{\rm C}_t}{c^{\rm C}_\alpha}.
    \end{equation}
    \normalsize
    Here $\bY^{\rm NC}$ and $\bY^{\rm NC}$ will be determined as the solution of a forward-backward stochastic differential equation system. In order to construct this system, we realize that the representative non-cooperative agent solves a mean field game given the mean field of the cooperative agents, $\bar{\bX}^{\rm C}$, and the representative cooperative agent solves a mean field control given the mean field of the non-cooperative agents, $\bar{\bX}^{\rm NC}$. Following this idea, we can write the FBSDE system for the representative non-cooperative agent as follows (e.g.~\cite[Chapter 3]{CarmonaDelarue_book_I}:
    \small
    \begin{equation}
        \begin{aligned}
        &dX^{\rm NC}_t = \Big(-\frac{(b^{\rm NC}_\alpha)^2}{c^{\rm NC}_\alpha} Y^{\rm NC}_t +b^{\rm NC}_X X^{\rm NC}_t\\
        &+ b^{\rm NC}_\mu  (p \bar X_t^{\rm NC} +(1-p)\bar{X}^{\rm C}_t) \Big) dt + \sigma dW^{\rm NC}_t,
        \\[1mm]
        &dY^{\rm NC}_t = -\big(b^{\rm NC}_X Y^{\rm NC}_t + c^{\rm NC}_X X^{\rm NC}_t \big)dt+ Z^{\rm NC}_t dW^{\rm NC}_t,   \\[1mm]
        &X^{\rm NC}_0 \sim \mu^{\rm NC}_0,\ Y^{\rm NC}_T =c^{\rm NC}_T X^{\rm NC}_T.
        \end{aligned}
    \end{equation}
    \normalsize
    Here, the forward component gives the state dynamics under the equilibrium control $\balpha^{\rm NC}$ (i.e., the control form at the equilibrium~\eqref{eq:mp-BR-formula} is plugged in the state dynamics. The backward component (i.e., the dynamics of the adjoint process $\bY$) is written as $dY^{\hat\balpha}_t = -\partial_x H^{\rm NC}(t, X_t^{\rm NC}, \alpha_t^{\rm NC}, \bar X_t^{\rm NC}, \bar X_t^{\rm C}, Y_t^{\rm NC}) dt + Z^{\rm NC}_t dW_t$ with terminal condition $Y_T^{\rm NC} = \partial_x \frac{c^{\rm NC}_T}{2} (X^{\rm NC}_T)^2$ by using the stochastic Pontryagin maximum principle.
    Similarly, we can write the FBSDE system for the representative cooperative agent as follows:
    \small
    \begin{equation}
        \begin{aligned}
        &dX^{\rm C}_t = \Big(-\frac{(b^{\rm C}_\alpha)^2}{c^{\rm C}_\alpha} Y^{\rm C}_t +b^{\rm C}_X X^{\rm C}_t\\
        &+ b^{\rm C}_\mu  (p \bar X_t^{\rm NC} +(1-p)\bar{X}^{\rm C}_t) \Big) dt + \sigma dW^{\rm C}_t,
        \\[1mm]
        &dY^{\rm C}_t = -\big(b^{\rm C}_X Y^{\rm C}_t + c^{\rm C}_X X^{\rm C}_t + b^{\rm C}_\mu (1-p)\bar Y_t^{\rm C}\\
        &+2c_\mu^{\rm C}(1-p)(p\bar{X}_t^{\rm NC}+(1-p)\bar X^{\rm C}_t) \big)dt+ Z^{\rm C}_t dW^{\rm C}_t,\\[1mm]  
        & X^{\rm C}_0 \sim \mu^{\rm C}_0, \ Y^{\rm C}_T =c^{\rm C}_T X^{\rm C}_T.
        \end{aligned}
    \end{equation}
    \normalsize
    Here, the forward component gives the state dynamics under the equilibrium control $\balpha^{\rm NC}$ (i.e., the control form at the equilibrium~\eqref{eq:mp-BR-formula} is plugged in the state dynamics. The backward component (i.e., the dynamics of the adjoint process $\bY$) is written as $dY^{\hat\balpha}_t = -\Big(\partial_x H^{\rm C}(t, X_t^{\rm C}, \alpha_t^{\rm C}, \bar X_t^{\rm C}, \bar X_t^{\rm NC}, Y_t^{\rm C}) + \partial_{\bar{x}^{\rm C}} H^{\rm N}(t, X_t^{\rm C}, \alpha_t^{\rm C}, \bar X_t^{\rm C}, \bar X_t^{\rm NC}, Y_t^{\rm C})\Big) dt + Z^{\rm C}_t dW_t$ with terminal condition $Y_T^{\rm C} = \partial_x \frac{c^{\rm C}_T}{2} (X^{\rm C}_T)^2$ by using the stochastic Pontryagin maximum principle. These two FBSDE systems are coupled through the mean fields $\bar{\bX}^{\rm NC}$ and $\bar{\bX}^{\rm C}$, in this way we end up with our final FBSDE system with 2 forward and 2 backward components. 
\end{proof}

\begin{proposition}[FBODE characterization of equilibria]
\label{prop:mp_FBODE}
    Assume there exists an $\RR^{2\times2} \times \RR^{2\times2} \times \RR^{2\times1} \times \RR^{2\times1}$-valued function $t \mapsto (A_t, B_t, C_t, \bar{X}_t)$ solving the following forward-backward ordinary differential equation (FBODE) system:
\small
\begin{align}
        &{\dot A_t} + A_t M_2 A_t + M_1 A_t + A_t M_1 - M_4 =0,\label{eq:FBODE_A_MP} \\[1mm]
        &\dot B_t + B_t M_2 B_t  +A_t M_2 B_t + B_t M_2 A_t\label{eq:FBODE_B_MP}\\
        &+ (M_1-M_6) B_t + B_t (M_1+M_3) + A_t M_3 - M_5=0,\nonumber\\[1mm]
       &\dot C_t +((A_t +B_t)M_2 + M_1-M_6) C_t =0,  \label{eq:FBODE_C_MP}\\[1mm]
       &\dot{\bar{X}}_t = (M_1 + M_2 A_t + M_2 B_t +M_3)\bar{X}_t + M_2C_t,\label{eq:FBODE_BarX_MP}\\[1mm]
       &A_T = \begin{bmatrix}
c_T^{\rm NC} &0\\
0 & c_T^{\rm C}\end{bmatrix} B_T = \begin{bmatrix}
0 &0\\
0 & 0
\end{bmatrix}, C_T=\begin{bmatrix}
0 \\
0 
\end{bmatrix}\label{eq:FBODE_boundarycond_MP},
\end{align}
\small
where $M_1=\begin{bmatrix}
b_X^{\rm NC} & 0\\
0&b_X^{\rm C} 
\end{bmatrix}$, 
$M_2=\begin{bmatrix}
-(b_\alpha^{\rm NC})^2/ c_\alpha^{\rm NC} & 0\\
0&-(b_\alpha^{\rm C})^2/ c_\alpha^{\rm C}
\end{bmatrix}$,\\[2mm] 
$M_3=\begin{bmatrix}
p b_\mu^{\rm NC}  & (1-p) b_\mu^{\rm NC}\\
p b_\mu^{\rm C}   & (1-p) b_\mu^{\rm C}
\end{bmatrix}$,
$M_4=\begin{bmatrix}
-c_X^{\rm NC} & 0\\
0&-c_X^{\rm C} 
\end{bmatrix}$, \\[2mm]
$M_5=\begin{bmatrix}
0 & 0\\
2(1-p)p c_\mu^{\rm C}&2(1-p)^2 c_\mu^{\rm C}
\end{bmatrix}$, 
$M_6=\begin{bmatrix}
0 & 0\\
0&b_\mu^{\rm C}(1-p)
\end{bmatrix}$.
\normalsize
Then, 
\small
\begin{equation}
    \label{eq:mp_br_formula}
\hat \alpha_t= - K(A_t X_t + B_t \bar{X}_t+C_t),
\end{equation}
\normalsize
is the MP-MFE control, where 
\small $K=\begin{bmatrix}
-b_\alpha^{\rm NC}/ c_\alpha^{\rm NC} & 0\\
0&-b_\alpha^{\rm C}/ c_\alpha^{\rm C}
\end{bmatrix}$,\vskip1mm
$X_t=\begin{bmatrix}
X_t^{\rm NC}\\
X_t^{\rm C}
\end{bmatrix}$,
$\bar{X}_t=\begin{bmatrix}
\bar{X}_t^{\rm NC}\\
\bar{X}_t^{\rm C}
\end{bmatrix}$,
$\hat{\alpha}_t=\begin{bmatrix}
\hat{\alpha}_t^{\rm NC}\\
\hat{\alpha}_t^{\rm C}
\end{bmatrix}$. 
\normalsize
\end{proposition}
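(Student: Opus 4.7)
The plan is to lift the scalar linear-quadratic ansatz used in the proof of Proposition~\ref{prop:mi_FBODE} to the two-subpopulation vector/matrix setting. First I would stack the NC and C components by introducing $X_t = (X^{\rm NC}_t, X^{\rm C}_t)^\top$, $Y_t = (Y^{\rm NC}_t, Y^{\rm C}_t)^\top$ and similarly for the means $\bar X_t$ and $\bar Y_t$. A direct inspection of the FBSDE system~\eqref{eq:mp_fbsde} shows that, in this compact notation, the forward equation takes the form $dX_t = (M_1 X_t + M_2 Y_t + M_3 \bar X_t)\,dt + \Sigma\,dW_t$ and the backward equation takes the form $dY_t = -(M_1 Y_t - M_4 X_t + M_6 \bar Y_t + M_5 \bar X_t)\,dt + Z_t\,dW_t$, with $\Sigma$ a diagonal noise matrix and with the matrices $M_i$ as defined in the statement; the best-response relation~\eqref{eq:mp-BR-formula} simultaneously rewrites as $\hat\alpha_t = -K Y_t$.

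Next I would postulate the affine ansatz $Y_t = A_t X_t + B_t \bar X_t + C_t$ with $A_t, B_t \in \RR^{2\times 2}$ and $C_t \in \RR^{2}$, which is the natural vector-valued counterpart of the ansatz used in~\eqref{eq:ansatz_backward}. Applying It\^o to this ansatz, substituting $dX_t$ from the forward equation together with the companion ODE $d\bar X_t = (M_1 \bar X_t + M_2 \bar Y_t + M_3 \bar X_t)\,dt$ obtained by taking expectations, and then replacing $Y_t$ and $\bar Y_t = (A_t + B_t)\bar X_t + C_t$ by the ansatz expressions, yields a drift that is affine in $(X_t, \bar X_t, 1)$. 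Comparing this with the right-hand side of the backward equation after the same ansatz substitution, and matching the coefficients of $X_t$, $\bar X_t$, and the constant term separately, produces the three coupled matrix/vector ODEs~\eqref{eq:FBODE_A_MP}--\eqref{eq:FBODE_C_MP}. The ODE~\eqref{eq:FBODE_BarX_MP} for $\bar X_t$ then follows by taking expectations in the forward SDE and using the ansatz once more to eliminate $\bar Y_t$. The terminal conditions in~\eqref{eq:FBODE_boundarycond_MP} are imposed so that the ansatz reproduces $Y^{\rm NC}_T = c^{\rm NC}_T X^{\rm NC}_T$ and $Y^{\rm C}_T = c^{\rm C}_T X^{\rm C}_T$ for every realization of $X_T$ and $\bar X_T$, which forces $A_T$ diagonal with entries $c^{\rm NC}_T, c^{\rm C}_T$ and $B_T = 0$, $C_T = 0$. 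Substituting the ansatz into $\hat \alpha_t = -K Y_t$ finally gives the stated control~\eqref{eq:mp_br_formula}.

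The main obstacle is the non-commutative matrix bookkeeping during the coefficient-matching step: one must carefully distinguish left- from right-multiplications by $A_t$ and $B_t$ and correctly separate the cooperative from the non-cooperative contributions. In particular, the asymmetric pair $A_t M_2 B_t$ and $B_t M_2 A_t$, together with the split $(M_1 - M_6) B_t$ versus $B_t(M_1 + M_3)$ appearing in~\eqref{eq:FBODE_B_MP}, encodes precisely the block structure induced by the fact that the mean-field-control correction captured by $M_5$ and $M_6$ acts only on the cooperative row, unlike its non-cooperative counterpart where no such $\EE[\partial_{\bar x} H]$ term arises. A useful sanity check is that $M_1, M_2, M_4$ are all diagonal, so~\eqref{eq:FBODE_A_MP} preserves diagonality of $A_t$ starting from the diagonal terminal value $A_T$; this is consistent with the fact that the two Riccati equations for the diagonal entries of $A_t$ decouple into standard scalar LQ equations for each subpopulation's self-adjoint coefficient, with all cross-population coupling concentrated in $B_t$, $C_t$ and $\bar X_t$.
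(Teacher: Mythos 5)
Your proposal follows essentially the same route as the paper's proof: stack the NC and C components into the matrix FBSDE $dX_t=(M_1X_t+M_2Y_t+M_3\bar X_t)dt+\Sigma dW_t$, posit the affine ansatz $Y_t=A_tX_t+B_t\bar X_t+C_t$, apply It\^o, substitute $\bar Y_t=(A_t+B_t)\bar X_t+C_t$, match the coefficients of $X_t$, $\bar X_t$ and the constant, and recover $\dot{\bar X}_t$ by taking expectations, with the terminal conditions read off from $Y_T=\mathrm{diag}(c_T^{\rm NC},c_T^{\rm C})X_T$. The only point to reconcile is the sign convention on the mean-field terms in the stacked backward equation: your form $dY_t=-(M_1Y_t-M_4X_t+M_6\bar Y_t+M_5\bar X_t)dt+Z_tdW_t$ is the literal transcription of \eqref{eq:mp_fbsde} given the stated $M_5,M_6$, whereas the paper matches against $dY_t=(-M_1Y_t+M_4X_t+M_5\bar X_t+M_6\bar Y_t)dt+Z_tdW_t$, so carrying your version through the coefficient matching reverses the signs of the $M_5$ and $M_6$ contributions in \eqref{eq:FBODE_B_MP}--\eqref{eq:FBODE_C_MP}; you should verify which convention actually reproduces the displayed ODEs.
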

\vskip3mm 
We would like to emphasize that $A_t$, $B_t$ and $C_t$ that are introduced in Propositions~\ref{prop:mi_FBODE} and~\ref{prop:mp_FBODE} are different functions. In the Proposition~\ref{prop:mi_FBODE}, these functions are $\RR$-valued; however in Proposition~\ref{prop:mp_FBODE}, $A_t$ and $B_t$ are matrix ($\RR^{2\times2}$)-valued functions and $C_t$ is a vector ($\RR^{2\times1}$)-valued function.

\begin{proof}
    We first write the FBSDE system~\eqref{eq:mp_fbsde} in the matrix form as follows:
    \small
    \begin{equation}
    \label{eq:mp_fbsde_matrix}
    \begin{aligned}
        &dX_t = (M_1 X_t + M_2 Y_t + M_3 \bar{X}_t) dt + \Sigma d W_t,
        \\[1mm]
        &dY_t = (-M_1 Y_t + M_4 X_t + M_5\bar{X}_t + M_6\bar{Y}_t)dt+ Z_t dW_t,
        \end{aligned}
    \end{equation}
    \normalsize
    where $X_t, \bar{X}_t, M_1, M_2, M_3, M_4, M_5, M_6$ are introduced in Proposition~\ref{prop:mp_FBODE} and \small
     $Y_t=\begin{bmatrix}
    Y_t^{\rm NC} \\
    Y_t^{\rm C} 
    \end{bmatrix}$,
    $\Sigma=\begin{bmatrix}
    \sigma^{\rm NC} & 0\\
    0& \sigma^{\rm C} 
    \end{bmatrix}$, $W_t=\begin{bmatrix}
    W_t^{\rm NC} \\
    W_t^{\rm C} 
    \end{bmatrix}$.\normalsize Then, we propose the ansatz $Y_t=A_tX_t + B_t \bar{X}_t+C_t$ and take the derivative and plug in first the $dX_t$, $d\bar{X}_t$ and second $Y_t$, $\bar{Y}_t$ ansatz forms to conclude:
    \small
    \begin{equation}
    \label{eq:mp_ansatz_backward}
        \begin{aligned}
            dY_t =& \dot{A}_t X_tdt + A_t dX_t + \dot{B}_t\bar{X}_tdt + B_td\bar{X}_t +\dot{C}_tdt\\
            =&\dot{A}_t X_tdt + A_t \Big((M_1 X_t + M_2(A_tX_t + B_t \bar{X}_t+C_t)  \\
            &+ M_3 \bar{X}_t) dt + \Sigma d W_t \Big)+ \dot{B}_t\bar{X}_tdt+\dot{C}_tdt\\
             &+ B_t\Big(((M_1+M_3) \bar{X}_t + M_2 (A_t+ B_t) \bar{X}_t+C_t) dt\Big). 
        \end{aligned}
    \end{equation}
    \normalsize
    Finally, we match the terms in equation~\eqref{eq:mp_ansatz_backward} with the backward equation in~\eqref{eq:mp_fbsde_matrix} to end up with the ODEs for the $A_t$, $B_t$ and $C_t$. The dynamics of $\bar{X}_t$ is acquired after plugging in the ansatz for $Y_t$ in the forward equation in~\eqref{eq:mp_fbsde_matrix} and taking the expectation.
\end{proof}

\begin{assumption}
\label{assu:exist_mp}
Let $M_{21}(t) := -A_t M_3 + M_5$, $M_{11}(t) := M_2A_t+ M_1 + M_3$, and $M_{22}(t) := -A_t M_2 -M_1 + M_6$.

\noindent For some matrices $E\in \mathbb{C}^{2\times 2}$ with $E^*=E$, $F\in\mathbb{C}^{2\times 2}$
with
\small
\begin{equation*}
     L(t)= \begin{bmatrix}
         EM_{11}(t) +F M_{21}(t) & E M_2 + M^{\top}_{11}(t)F + F M_{22}(t)\\
         0 & M_2 F
     \end{bmatrix}
 \end{equation*}

 \normalsize
\noindent the condition $L(t)+L^*(t)\leq0$ holds for all $t\in[0,T]$ and $E>0$. Here $G^*$ denotes the complex conjugate of complex valued matrix $G$.
\end{assumption}
\begin{theorem}[Existence of a solution]
\label{the:exist_mp}
 If the Assumption~\ref{assu:exist_mp} holds, then there exists an MP-MFE.
\end{theorem}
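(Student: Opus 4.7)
The plan is to leverage Proposition~\ref{prop:mp_FBODE}, which reduces the task to proving existence of a solution to the FBODE system~\eqref{eq:FBODE_A_MP}--\eqref{eq:FBODE_boundarycond_MP}. I would then solve the four components sequentially, in the order $A_t \to B_t \to C_t \to \bar{X}_t$, exploiting the fact that once the earlier equations are solved with continuous solutions, the later ones become linear ODEs with continuous coefficients.

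First I would address~\eqref{eq:FBODE_A_MP}. Because $M_1$, $M_2$, and $M_4$ are all block-diagonal (indeed diagonal), the matrix Riccati equation for $A_t$ decouples into two scalar Riccati equations, each of exactly the form encountered in the proof of Theorem~\ref{the:exist_mi}. By the standard scalar Riccati theory invoked there (see \cite[Chapter 2]{CarmonaDelarue_book_I}, \cite{JACOBSON1970258}), each diagonal entry has a unique bounded continuous solution on $[0,T]$ under the sign assumptions on the cost coefficients, and the off-diagonal entries remain zero. Hence $A_t$ exists, is continuous and bounded, and is diagonal.

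The main obstacle is the second step: existence of $B_t$. With $A_t$ in hand, equation~\eqref{eq:FBODE_B_MP} becomes a nonlinear matrix Riccati equation in $B_t$ with continuous coefficients. Local existence near $T$ follows from Picard--Lindelöf, and the real work is to exclude finite-time blow-up on $[0,T]$. In matrix form one can rewrite the equation as $\dot B_t = -B_t M_{22}(t) + M_{11}^{\top}(t) B_t + B_t M_2 B_t - M_{21}(t)$ using the auxiliary matrices $M_{11}(t), M_{22}(t), M_{21}(t)$ defined in Assumption~\ref{assu:exist_mp}. I would then follow the Lyapunov approach to matrix Riccati equations of Freiling~\cite{FREILING2002243}: construct a quadratic form built from the matrices $E>0$ and $F$ appearing in the assumption, and show that the condition $L(t)+L^*(t)\le 0$ forces this Lyapunov-type function of $B_t$ to be non-increasing, giving an a priori bound that precludes blow-up and extends the local solution to all of $[0,T]$. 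The precise construction is the standard one linking Hermitian linearizations of matrix Riccati equations to solvability, so this step essentially reduces to citing~\cite{FREILING2002243} once the assumption is verified to be of the required form.

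Once $A_t$ and $B_t$ are known to be continuous on $[0,T]$, equation~\eqref{eq:FBODE_C_MP} is a homogeneous linear first-order ODE in $C_t$ with continuous coefficient matrix and terminal condition $C_T=0$; Picard--Lindelöf yields a unique continuous solution (in fact $C_t\equiv 0$, since $C_T=0$ solves the homogeneous equation). Finally,~\eqref{eq:FBODE_BarX_MP} is a linear first-order ODE for $\bar X_t$ with continuous coefficients and a given initial condition, so it admits a unique solution on $[0,T]$ by the same theorem. Assembling the four pieces provides a solution to the full FBODE system, which by Proposition~\ref{prop:mp_FBODE} yields the MP-MFE control~\eqref{eq:mp_br_formula}, completing the proof.
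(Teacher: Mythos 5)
Your proposal is correct and follows essentially the same route as the paper: reduce to the FBODE system of Proposition~\ref{prop:mp_FBODE}, solve the Riccati equation for $A_t$, invoke Freiling's solvability criterion for nonsymmetric matrix Riccati equations (\cite[Theorem 3.11]{FREILING2002243}) under Assumption~\ref{assu:exist_mp} to get $B_t$, and then treat $C_t$ and $\bar X_t$ as linear ODEs with continuous coefficients. Your added observations --- that the $A_t$ equation decouples into two scalar Riccati equations of the form already handled in Theorem~\ref{the:exist_mi}, and that $C_t\equiv 0$ --- are correct refinements of the paper's argument rather than a different approach.
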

\begin{proof}
    Similar to the proof of Theorem~\ref{the:exist_mi}, in order to show the existence of the MP-MFE, we can focus on the existence of the solution of the FBODE system given in Proposition~\ref{prop:mp_FBODE} that characterizes the solution of the MP-MFE. We again realize that the differential equations for $(A_t)_t$~\eqref{eq:FBODE_A_MP} and $(B_t)_t$~\eqref{eq:FBODE_B_MP} are matrix Riccati equations and the equation~\eqref{eq:FBODE_A_MP} has a unique and continuous solution that is bounded under the given model parameter assumptions. \cite{JACOBSON1970258,FREILING2002243} Then we can plug in the $(A_t)_t$ matrix in Riccati equation~\eqref{eq:FBODE_B_MP} to solve for $(B_t)_t$. This equation is a nonsymmetric Riccati equation and under the assumption~\ref{assu:exist_mp}, it has a continuous solution (see~\cite[Theorem 3.11]{FREILING2002243}). Then we can plug in $(A_t)_t$ and $(B_t)_t$ in the linear ODE system~\eqref{eq:FBODE_C_MP} which has a unique continuous solution $(C_t)_t$ since the coefficients of the linear ODE are continuous. Then by plugging in $(A_t)_t$, $(B_t)_t$ and $(C_t)_t$ in the linear ODE~\eqref{eq:FBODE_BarX_MP} we can find the unique solution $(\bar X_t)_t$ since the coefficients of the linear ODE are continuous. Different than the Theorem~\ref{the:exist_mi}, we only conclude the existence results, since the nonsymmetric Riccati equation~\ref{eq:FBODE_B_MP} does not have a uniqueness result for the given condition.
\end{proof}

\section{Conclusion \& Future Work}

In this paper, we have proposed two families of models to study large populations of strategic agents with a combination of cooperative and non-cooperative behavior. We first presented the models with finitely many agents, and then we presented the mean field models when the number of agents goes to infinity. For each type of mean field model, we proved optimality conditions based on Pontryagin stochastic maximum principle, using forward-backward stochastic differential equations of McKean-Vlasov type and ordinary differential equations, for which we showed existence (for both mixed individual and mixed population models) and uniqueness (for the mixed individual model) of solutions under suitable conditions. 

As future work, firstly we plan to theoretically and numerically analyze the effect of $\lambda$ and $p$. Secondly, we plan to study general models beyond linear-quadratic structure, and applications to the tragedy of the commons. Finally, we plan to analyze the convergence of $\epsilon$-Nash equilibrium to the mean field equilibrium in both settings.

\bibliographystyle{ieeetr}

\end{document}